
\documentclass[10pt,twoside,a4paper]{article}
\textwidth= 5.00in
\textheight= 7.4in
\topmargin = 30pt
\evensidemargin=45pt
\oddsidemargin=45pt
\headsep=17pt
\parskip=.5pt
\parindent=12pt
\font\smallit=cmti10

\usepackage{amssymb,latexsym,amsmath,epsfig,amsthm,hyperref}

\makeatletter

\renewcommand\section{\@startsection {section}{1}{\z@}
{-30pt \@plus -1ex \@minus -.2ex}
{2.3ex \@plus.2ex}
{\normalfont\normalsize\bfseries\boldmath}}

\renewcommand\subsection{\@startsection{subsection}{2}{\z@}
{-3.25ex\@plus -1ex \@minus -.2ex}
{1.5ex \@plus .2ex}
{\normalfont\normalsize\bfseries\boldmath}}

\renewcommand{\@seccntformat}[1]{\csname the#1\endcsname. }

\makeatother

\newtheorem{theorem}{Theorem}

\newtheorem{corollary}{Corollary}

\theoremstyle{definition}

\newcommand{\ssum}[4][1.8em]%
    {\mathop{\sum_{#2}^{#3}}_{\makebox[#1][l]{$\scriptstyle{#4}$}}}

\newcommand{\heading}[1]{\smallskip\noindent%
                         \textbf{#1.}\hspace*{0.5em}\ignorespaces}
\newcommand{\adj}{^{\dagger}}\newcommand{\phadj}{^{\vphantom{\dagger}}}
\newcommand{\tr}[1]{\mathord{\mathrm{tr}}{\left( #1 \right)}}
\DeclareMathAlphabet{\columnfont}{OT1}{cmr}{bx}{it}
\DeclareMathAlphabet{\matrixfont}{OT1}{cmr}{bx}{sl}
\newcommand{\col}[1]{\mathord{\columnfont{#1}}}
\newcommand{\ca}{\col{a}}\newcommand{\cb}{\col{b}}\newcommand{\cc}{\col{c}}
\newcommand{\M}[1]{\mathord{\matrixfont{#1}}}
\newcommand{\unity}{\M{1}_{n}}
\newcommand{\MA}{\M{A}}\newcommand{\MB}{\M{B}}\newcommand{\MC}{\M{C}}

\begin{document}
\begin{center}
\uppercase{\bfseries Multiplicative functions arising from the study %
       of mutually unbiased bases}
\vskip 20pt
{\bfseries Heng Huat Chan}\\
{\smallit Department of Mathematics, %
  National University of Singapore, %
  Singapore, 119076, Singapore}\\
{\ttfamily matchh@nus.edu.sg}\\
\vskip 10pt
{\bfseries Berthold-Georg Englert}\\
{\smallit Centre for Quantum Technologies, %
  3 Science Drive 2, Singapore 117543, Singapore; %
  Department of Physics, National University of Singapore, %
  2 Science Drive 3, Singapore 117551, Singapore; %
  MajuLab, CNRS-UCE-SU-NUS-NTU, International Joint Research Unit, CNRS, %
  Singapore 117543, Singapore}\\
{\ttfamily englert@u.nus.edu}\\ 
\end{center}
 
\vskip 30pt

\centerline{\bfseries Abstract}
\noindent%
We embed the somewhat unusual multiplicative function, which was
serendipitously discovered in 2010 during a study of mutually unbiased bases
in the Hilbert space of quantum physics, into two families of
multiplicative functions that we construct as generalizations of that
particular example.
In addition, we report yet another multiplicative function, which is also
suggested by that example; it can be used to express the squarefree part of an
integer in terms of an exponential sum. 

\pagestyle{myheadings}
\markboth{Heng Huat Chan and Berthold-Georg Englert}%
{Multiplicative functions arising from mutually unbiased bases}

\thispagestyle{empty}
\baselineskip=12.875pt
\vskip 30pt

\section{Multiplicative functions}
We begin by recalling a few basic definitions and results. 
A real or complex valued function defined on the positive integers is called
an arithmetic function or a number-theoretic function. 

For any two positive integers $m$ and $n$,
we use $(m,n)$ to denote the greatest common divisor of $m$ and $n$.
An arithmetic function $f$ is called multiplicative if $f$ is not
identically zero and if 
\begin{equation*}
  f(mn)=f(m)f(n)\,\,\text{whenever $(m,n)=1$}.
\end{equation*}
It follows that $f(1)=1$ if $f$ is multiplicative.

In general, given an arithmetic function $f$, it is impossible to determine
$f(n)$ for a large positive integer $n$. 
However, if $f$ is a multiplicative function, then we can evaluate  $f(n)$
provided that the factorization of $n$ into prime powers and the formulas for
the multiplicative function $f$ at prime powers are known. 

Euler's totient function defined by
\begin{equation*}
  \varphi(n) =\ssum{k=1}{n}{(k,n)=1} 1,
\end{equation*}
which  is the count of positive integers that are less than $n$ and coprime
with $n$, is an important example of a multiplicative function.
For a proof of the multiplicative property of $\varphi(n)$ using the Chinese
Remainder Theorem, see \cite[Section 5.5]{Hardy-Wright}.
It is known that
\begin{equation*}
   \varphi(p^\alpha) = p^\alpha-p^{\alpha-1}  \quad \text{for}\ \alpha\geq1
\end{equation*}
and consequently, if
\begin{equation*}
   n=\prod_{j=1}^J p_j^{\alpha_j}
\end{equation*}
is the known factorization of $n$ into powers of distinct primes, then
\begin{equation*}
   \varphi(n)=\prod_{j=1}^J (p_j^{\alpha_j}-p_j^{\alpha_j-1}),  
\end{equation*}
and the values of $\varphi(n)$ can be computed in this way.

If $f$ and $g$ are two arithmetic functions, we define their Dirichlet
product $f*g$ to be the arithmetic function given by
\begin{equation*}
(f*g)(n)= \sum_{d|n} f(d)g(n/d),  
\end{equation*}
where $\displaystyle\sum_{d|n}f(d)$ denotes the sum of $f(d)$
over all positive divisors of $n$, and analogously for the union over divisors
denoted by \raisebox{0pt}[0pt][0pt]{$\displaystyle\bigcup_{d|n}$} below. 
The following result is well known:
\begin{theorem}\label{Dprod} 
If $f$ and $g$ are multiplicative, then $f*g$ is multiplicative.
\end{theorem}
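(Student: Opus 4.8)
The plan is to verify the two requirements in the definition of multiplicativity: that $f*g$ is not identically zero, and that $(f*g)(mn)=(f*g)(m)(f*g)(n)$ whenever $(m,n)=1$. The first is immediate: since $f$ and $g$ are multiplicative we have $f(1)=g(1)=1$, hence $(f*g)(1)=f(1)g(1)=1\neq0$, so $f*g$ does not vanish identically.

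The crux is the coprime case. First I would fix $m,n$ with $(m,n)=1$ and expand the definition,
\begin{equation*}
  (f*g)(mn)=\sum_{d|mn} f(d)\,g(mn/d).
\end{equation*}
The key structural fact I would invoke is that, because $(m,n)=1$, the divisors of $mn$ are in bijective correspondence with pairs $(a,b)$ where $a|m$ and $b|n$, via $d=ab$; this is precisely the disjoint decomposition hinted at by the $\bigcup_{d|n}$ notation introduced above. Moreover this correspondence automatically yields the coprimality relations $(a,b)=1$ and $(m/a,\,n/b)=1$, since $a$ divides $m$ while $b$ divides $n$ and $(m,n)=1$.

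With the bijection in hand, I would re-index the sum over the pairs $(a,b)$ and apply multiplicativity of $f$ and $g$ separately:
\begin{equation*}
  (f*g)(mn)=\sum_{a|m}\sum_{b|n} f(ab)\,g\bigl((m/a)(n/b)\bigr)
           =\sum_{a|m}\sum_{b|n} f(a)f(b)\,g(m/a)g(n/b),
\end{equation*}
where the second equality uses $f(ab)=f(a)f(b)$ from $(a,b)=1$ and the analogous identity for $g$ from $(m/a,n/b)=1$. Since the summand now factors, the double sum separates into a product of single sums, giving
\begin{equation*}
  (f*g)(mn)=\Bigl(\sum_{a|m} f(a)g(m/a)\Bigr)\Bigl(\sum_{b|n} f(b)g(n/b)\Bigr)
           =(f*g)(m)\,(f*g)(n),
\end{equation*}
as desired. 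The main obstacle to pin down carefully is the bijection step: one must check both that $d\mapsto(a,b)$ is well defined and invertible when $(m,n)=1$, and that it preserves the two coprimality conditions needed to invoke multiplicativity; once that is established the remaining manipulations are purely formal rearrangements of a finite double sum.
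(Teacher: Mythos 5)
Your proof is correct and is exactly the standard argument: the paper itself gives no proof of this theorem, deferring to Apostol's Theorem 2.14, and the divisor-factorization bijection $d\mapsto(a,b)$ with $a\mid m$, $b\mid n$ that you use is precisely that reference's approach. Nothing is missing; the coprimality of $(a,b)$ and of $(m/a,n/b)$ follows as you indicate from $(m,n)=1$, and the rest is formal rearrangement.
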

\noindent For a proof, see \cite[Theorem 2.14]{Apostol}.
As an application of Theorem \ref{Dprod}, we have the following corollary,
which appears as an exercise in \cite[p. 49]{Apostol}:
\begin{corollary}\label{gcdsum}
Let $f$ be a multiplicative function. Then the function
\begin{equation*}
\xi_f(n)=\sum_{k=1}^n f\bigl((k,n)\bigr)  
\end{equation*}
is multiplicative.
\end{corollary}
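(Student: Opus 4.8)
The plan is to recognize $\xi_f$ as a Dirichlet product of two multiplicative functions and then invoke Theorem \ref{Dprod}. The natural way to do this is to sort the $n$ terms of the sum according to the value of the greatest common divisor. For each divisor $d$ of $n$, set $A_d=\{k:1\le k\le n,\ (k,n)=d\}$. Since the gcd of any $k$ in $\{1,\dots,n\}$ with $n$ is necessarily a divisor of $n$, these sets are pairwise disjoint and exhaust the range, so that
\begin{equation*}
\{1,2,\dots,n\}=\bigcup_{d|n}A_d .
\end{equation*}
Because $f\bigl((k,n)\bigr)$ takes the constant value $f(d)$ throughout $A_d$, the sum collapses to
\begin{equation*}
\xi_f(n)=\sum_{d|n}f(d)\,|A_d|.
\end{equation*}

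Next I would evaluate the cardinality $|A_d|$. The substitution $k=dj$ turns the condition $(k,n)=d$ with $1\le k\le n$ into the condition $(j,n/d)=1$ with $1\le j\le n/d$, which is a bijection between $A_d$ and the set of integers counted by Euler's totient function. Hence $|A_d|=\varphi(n/d)$, and inserting this yields
\begin{equation*}
\xi_f(n)=\sum_{d|n}f(d)\,\varphi(n/d)=(f*\varphi)(n).
\end{equation*}

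With the identity $\xi_f=f*\varphi$ in hand, the conclusion is immediate: $f$ is multiplicative by hypothesis, $\varphi$ is multiplicative as recalled earlier in the section, and Theorem \ref{Dprod} then guarantees that their Dirichlet product is multiplicative. The one step that genuinely requires care, rather than bookkeeping, is the counting claim $|A_d|=\varphi(n/d)$; everything else is a reorganization of the defining sum. I would make sure to state the $k=dj$ bijection cleanly and to note explicitly that $(dj,n)=d$ is equivalent to $(j,n/d)=1$, since that equivalence is precisely what pins down the count and hence produces $\varphi$ as the second factor of the Dirichlet product.
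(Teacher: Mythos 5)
Your proposal is correct and follows essentially the same route as the paper: partition $\{1,\dots,n\}$ by the value of $(k,n)$, count each class by $\varphi(n/d)$, identify $\xi_f=f*\varphi$, and invoke Theorem \ref{Dprod}. Your explicit justification of $|A_d|=\varphi(n/d)$ via the substitution $k=dj$ is a detail the paper leaves implicit, but the argument is identical in substance.
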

\begin{proof}
Note that
\begin{equation*}
\{k|1\leq k\leq n\} =\bigcup_{d|n}\{k| 1\leq k\leq n\ \text{and}\ (k,n)=d\} 
\end{equation*}
and, therefore,
\begin{eqnarray*}
  \xi_f(n)&=&\sum_{k=1}^n f\bigl((k,n)\bigl)\;
   =\sum_{d|n}
  \ssum{k=1}{n}{(k,n)=d} f\bigl((k,n)\bigr)
  \\&=&\sum_{d|n}f(d)\varphi\big(n/d\big)=(f*\varphi)(n).
\end{eqnarray*}
Since $f$ and $\varphi$ are both multiplicative, we deduce by Theorem
\ref{Dprod} that $\xi_f=f*\varphi$ is multiplicative, too. 
\end{proof}

\section{A curious multiplicative function}
In \cite[Appendix C]{DEBZ}, T. Durt, B.-G. Englert, I. Bengtsson, and
K. \.{Z}yczkowski observed, during their study of the properties of mutually 
unbiased bases, the following interesting identity associated with the Gauss
sum: 
\begin{theorem}\label{Gauss-sum}
For any two integers $m$ and $n$ with ${0<m<n}$, and $\zeta_n=e^{2\pi i/n}$,
one has
\begin{equation*}
\frac{1}{\sqrt{n}}\left|\sum_{l=0}^{n-1}\zeta_{2n}^{(n-l)l m}\right|
=\begin{cases}
0\quad\text{if $\,n\,$ is even with both $\,n/(m,n)\,$ and $\,m/(m,n)\,$ odd,}
  \\[1ex]
  \sqrt{(m,n)}\quad\text{otherwise.}
 \end{cases}  
\end{equation*}
\end{theorem}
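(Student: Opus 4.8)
The plan is to prove the identity by computing the squared modulus $\lvert S\rvert^2$ directly, where $S=\sum_{l=0}^{n-1}\zeta_{2n}^{(n-l)lm}$, thereby avoiding any appeal to the classical evaluation of quadratic Gauss sums. First I would record the elementary but crucial observation that the summand $\zeta_{2n}^{(n-l)lm}$ is invariant under $l\mapsto l+n$: replacing $l$ by $l+n$ changes the exponent by a multiple of $2n$, so $\zeta_{2n}$ returns to its original value. Hence $S$ may be regarded as a sum over the residues $l\in\mathbb{Z}/n\mathbb{Z}$, and the same periodicity holds for each variable in the double sum below.

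Next I would expand $\lvert S\rvert^2=S\overline{S}=\sum_{l,k=0}^{n-1}\zeta_{2n}^{m[(n-l)l-(n-k)k]}$ and simplify the exponent using the factorization $(n-l)l-(n-k)k=(l-k)(n-l-k)$. For each fixed difference $j\in\{0,\dots,n-1\}$ I would sum over the $n$ pairs $(l,k)$ with $l\equiv k+j$, taking $l=k+j$ as integers and letting $k$ run from $0$ to $n-1$; the periodicity from the first step guarantees this enumerates each residue class exactly once. Writing $(l-k)(n-l-k)=j(n-j-2k)$ and separating the factors, the inner sum over $k$ becomes the geometric sum $\sum_{k=0}^{n-1}\zeta_n^{-mjk}$, which equals $n$ when $n\mid mj$ and vanishes otherwise.

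With $g=(m,n)$, $m=gm'$, $n=gn'$ and $(m',n')=1$, the condition $n\mid mj$ is equivalent to $n'\mid j$, so only the $g$ values $j=n't$ with $t=0,\dots,g-1$ survive. Substituting these and carefully reducing the remaining phase factors $\zeta_{2n}$ and $\zeta_n$ to signs, I expect the whole expression to collapse to
\[
\lvert S\rvert^2=n\sum_{t=0}^{g-1}(-1)^{m'n'\,t(t+g)}.
\]
The final step is a parity analysis of this last sum. When $m'n'$ is even every term is $1$ and the sum is $g$; when $m'n'$ is odd one distinguishes the parity of $g$, using that $t(t+g)\equiv t(t+1)\equiv 0\pmod 2$ for $g$ odd, whereas $t(t+g)\equiv t\pmod 2$ for $g$ even. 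This yields $g$ in the first case and $\sum_{t=0}^{g-1}(-1)^t=0$ in the second. Thus $\lvert S\rvert^2$ equals $ng$ except precisely when $m'$ and $n'$ are both odd while $g$ is even --- equivalently, when $n$ is even with $n/(m,n)$ and $m/(m,n)$ both odd --- where it vanishes; dividing by $n$ and taking square roots gives the two cases of the theorem.

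I would expect the bookkeeping in the second and third steps to be the main obstacle: one must resist the temptation to pass to the sum and difference variables $(l-k,\,l+k)$ simultaneously, since the resulting exponent is not well defined modulo $n$, and instead fix the difference and exploit the mod-$n$ periodicity, while tracking the interplay between the $2n$-th root $\zeta_{2n}$ and the $n$-th root $\zeta_n$ that produces the decisive signs.
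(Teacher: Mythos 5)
Your computation is correct and I checked the decisive intermediate step: with $g=(m,n)$, $m=gm'$, $n=gn'$, the surviving phase at $j=n't$ is $\zeta_{2n}^{mj(n-j)}=(-1)^{m'n't(g-t)}=(-1)^{m'n't(t+g)}$, and since $t(t+g)\equiv t(g+1)\pmod 2$ the parity split gives $\lvert S\rvert^2=ng$ except when $g$ is even and $m'n'$ is odd, where the alternating sum over $t$ vanishes --- exactly the excluded case of the theorem. This is, however, a genuinely different route from the paper's. The paper proves Theorem~\ref{Gauss-sum} by linear algebra: it realizes the normalized Gauss sum (up to phase) as the inner product $\ca_j\adj\cc_{m,k}\phadj$ between eigenvectors of a Weyl--Schwinger pair of unitaries, and evaluates $\bigl|\ca_j\adj\cc_{m,k}\phadj\bigr|^2$ as the trace of a product of two spectral projectors, which yields the filtered sum $\frac{1}{n}\sum_{l}\zeta_n^{(j-k)l}\zeta_{2n}^{-(n-l)lm}\delta^{(n)}_{ml,0}$ before the same restriction to $l\equiv0\pmod{n/(m,n)}$ and the same parity analysis. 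Your fixed-difference expansion of the double sum is essentially the paper's own ``direct verification'' of Corollary~\ref{Gauss-sum-identity} (there the shift $l\mapsto l+l'$ plays the role of your difference $j$), so the two arguments converge on the identical filtered sum. What the paper's longer route buys is the interpretation in terms of mutually unbiased bases and the slightly more general identity carrying the extra character $\zeta_n^{(j-k)l}$; what your route buys is a short, self-contained proof needing nothing beyond geometric series and the mod-$n$ periodicity of the summand, which you correctly single out as the one point requiring care when reindexing the pairs $(l,k)$.
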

\noindent They arrived at this result by linear-algebra arguments and without
relying on the properties of Jacobi symbols and contour integrals that are
usually employed when evaluating Gauss sums.
This connection between linear algebra and number theory is noteworthy and,
therefore, we revisit the matter in Section~\ref{sec:MUB}.

In \cite{DEBZ}, they also indicated that
\begin{equation}\label{DEBZ_h}
h(n) =\frac{1}{\sqrt{n}}\sum_{m=1}^{n-1} \left|\sum_{l=0}^{n-1}
  \zeta_{2n}^{(n-l)l m}\right|
  +\left\{\begin{array}{@{}ll@{}}\displaystyle
            \frac{1}{2}\sqrt{n} & \text{if $n$ is even}\\[2ex]
            \sqrt{n} & \text{if $n$ is odd}
  \end{array}\right\}
\end{equation}
is a multiplicative function.
It is possible to simplify the expression of $h(n)$.
We first introduce $v_p(n)$, the count of prime number factors $p$ contained
in the positive integer $n$, defined by
\begin{equation*}
v_p(n) = \alpha \,\,\text{whenever $p^\alpha|n$ but $p^{\alpha+1}\nmid n$.}  
\end{equation*}
Then we can, after applying Theorem \ref{Gauss-sum}, rewrite $h(n)$ as
\begin{equation}\label{DEBZ-h}
  h(n)=\begin{cases} \displaystyle
    \sum_{k=1}^n \sqrt{(k,n)}-\frac{1}{2}\sqrt{n}
   -\ssum{k=1}{n-1}{v_2(k)=v_2(n)}\sqrt{(k,n)}
    &\text{if $n$ is even,}\\
   \displaystyle\sum_{k=1}^n \sqrt{(k,n)}&\text{if $n$ is odd.}
       \end{cases}  
\end{equation}
Note that by Corollary \ref{gcdsum},
\raisebox{0pt}[0pt][0pt]{$\displaystyle\xi_s(n)=\sum_{k=1}^n \sqrt{(k,n)}$}
is multiplicative because $s(n)=\sqrt{n}$ has this property.

The function $h(n)$ appears to be a new multiplicative function.
One question we can ask is whether we can construct multiplicative functions
when the prime $2$ that is privileged in the definition of $h(n)$ is replaced
by any odd prime $p$ and whether the function $s(n)$ can be replaced by other
arithmetic functions.
It turns out that this is possible.

\section{First generalization}
As we shall confirm in Section \ref{sec:confirm}, one generalization of $h(n)$
is the following: 
\begin{theorem} \label{DEBZ-general-2}
Choose a multiplicative function $f$ and a prime number $p$, as well as a
sequence of complex numbers $\kappa_0=0$, $\kappa_1$, $\kappa_2$, \dots,
and a sequence of positive integers $a_1$, $a_2$, \dots\  with
$a_{\alpha}\leq\alpha$.
Then the function
\begin{equation*}
  h^{(1)}_{f,p}(n)=\xi_f(n)-\kappa_{v_p(n)}\ssum{k=1}{n}{v_p(k)=v_p(n)-a_{v_p(n)}}
  f\bigl((k,n)\bigr)  
\end{equation*}
is multiplicative.
\end{theorem}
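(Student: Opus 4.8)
The plan is to collapse the constrained sum into a closed form and thereby exhibit $h^{(1)}_{f,p}$ as a pointwise product of two manifestly multiplicative functions. Fix $n$ and write $n=p^{\alpha}N$ with $\alpha=v_p(n)$ and $p\nmid N$, and set $\beta=\alpha-a_\alpha$; the hypotheses $1\le a_\alpha\le\alpha$ give $0\le\beta<\alpha$. If $\alpha=0$, then $\kappa_{v_p(n)}=\kappa_0=0$, so $h^{(1)}_{f,p}(n)=\xi_f(n)$ and there is nothing to simplify; assume henceforth that $\alpha\ge1$.

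First I would simplify the constrained sum $S(n)=\sum_{v_p(k)=\beta}f\bigl((k,n)\bigr)$. Each admissible $k$ is uniquely $k=p^{\beta}k'$ with $p\nmid k'$ and $1\le k'\le p^{a_\alpha}N$. Since $(p^{\alpha},N)=1$, the gcd factors into coprime parts as $(k,p^{\alpha}N)=(k,p^{\alpha})(k,N)$, whence $f\bigl((k,n)\bigr)=f\bigl((k,p^{\alpha})\bigr)f\bigl((k,N)\bigr)$ by multiplicativity of $f$; moreover $(k,p^{\alpha})=p^{\beta}$ (using $p\nmid k'$ and $\beta<\alpha$) and $(k,N)=(k',N)$ (using $p\nmid N$). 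Hence
\begin{equation*}
  S(n)=f(p^{\beta})\ssum[2.6em]{k'=1}{p^{a_\alpha}N}{(k',p)=1} f\bigl((k',N)\bigr).
\end{equation*}
The summand $f\bigl((k',N)\bigr)$ has period $N$ in $k'$ and sums to $\xi_f(N)$ over any block of $N$ consecutive integers. Removing the multiples of $p$ by setting $k'=pk''$ and using $(pk'',N)=(k'',N)$,
\begin{equation*}
  \ssum[2.6em]{k'=1}{p^{a_\alpha}N}{(k',p)=1} f\bigl((k',N)\bigr)
  =\sum_{k'=1}^{p^{a_\alpha}N} f\bigl((k',N)\bigr)
  -\sum_{k''=1}^{p^{a_\alpha-1}N} f\bigl((k'',N)\bigr)
  =\bigl(p^{a_\alpha}-p^{a_\alpha-1}\bigr)\xi_f(N),
\end{equation*}
since both ranges are multiples of $N$. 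Thus $S(n)=f(p^{\beta})\,\varphi(p^{a_\alpha})\,\xi_f(N)$.

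Substituting this back and using $\xi_f(n)=\xi_f(p^{\alpha})\xi_f(N)$ (multiplicativity of $\xi_f$, Corollary \ref{gcdsum}) gives, for every $n$,
\begin{equation*}
  h^{(1)}_{f,p}(n)=\xi_f(N)\,G\bigl(v_p(n)\bigr),\qquad
  G(\alpha)=\xi_f(p^{\alpha})-\kappa_\alpha\,f(p^{\alpha-a_\alpha})\,\varphi(p^{a_\alpha}),
\end{equation*}
with $G(0)=\xi_f(1)=1$, consistent with the case $\alpha=0$. I would then finish by observing that both $n\mapsto G\bigl(v_p(n)\bigr)$ and $n\mapsto\xi_f\bigl(n/p^{v_p(n)}\bigr)$ are multiplicative: if $(m,n)=1$, then $p$ divides at most one of the two factors, so $v_p$ is additive with at most one nonzero summand, while the $p$-free parts of $m$ and $n$ are coprime and multiply to that of $mn$. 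A pointwise product of multiplicative functions is multiplicative and $h^{(1)}_{f,p}(1)=1$, so $h^{(1)}_{f,p}$ is multiplicative.

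The main obstacle is the closed-form evaluation of $S(n)$ in the second paragraph: one must simultaneously enforce $v_p(k)=\beta$ exactly and $(k',p)=1$, and exploit the period-$N$ structure of $f\bigl((k',N)\bigr)$ across the enlarged interval $[1,p^{a_\alpha}N]$. It is precisely the positivity $a_\alpha\ge1$ that makes the subtracted subsum nonempty and produces the clean factor $\varphi(p^{a_\alpha})$; once this identity is in hand, the multiplicativity follows at once.
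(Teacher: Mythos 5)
Your proposal is correct and follows essentially the same route as the paper: the heart of both arguments is the evaluation of the constrained sum as $f\bigl(p^{\alpha-a_{\alpha}}\bigr)\varphi\bigl(p^{a_{\alpha}}\bigr)\xi_f(\nu)$, with your inclusion--exclusion removal of the multiples of $p$ being the same device as the paper's $\lfloor 1/(k,p)\rfloor=\sum_{d\mid(k,p)}\mu(d)$ trick. The only cosmetic difference is that you package the conclusion as a pointwise product of two multiplicative functions rather than verifying $h^{(1)}_{f,p}(p^{\alpha}\nu)=h^{(1)}_{f,p}(p^{\alpha})h^{(1)}_{f,p}(\nu)$ directly.
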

\begin{proof}
If $(m,n)=1$ and $(mn,p)=1$, then by Corollary \ref{gcdsum},
\begin{equation*}
h^{(1)}_{f,p}(mn)=\xi_f(mn)=\xi_f(m)\xi_f(n)=h^{(1)}_{f,p}(m)h^{(1)}_{f,p}(n).  
\end{equation*}
In order to complete the proof, we need to show that
if $\alpha>0$ and $(\nu,p)=1$, then
\begin{equation*}
  h^{(1)}_{f,p}(p^\alpha\nu) = h^{(1)}_{f,p}(p^\alpha)h^{(1)}_{f,p}(\nu).
\end{equation*}
We evaluate the sum in
\begin{equation*}
  h^{(1)}_{f,p}(p^\alpha \nu) =\xi_f(p^\alpha \nu)
  -\kappa_\alpha \ssum{k=1}{p^\alpha\nu}{v_p(k)=\alpha-a_\alpha}
  f\bigl((k,p^\alpha\nu)\bigr)
\end{equation*}
in a few steps, proceeding from
\begin{align*}
  \ssum{k=1}{p^\alpha\nu}{v_p(k)=\alpha-a_\alpha}
  f\bigl((k,p^\alpha\nu)\bigr)
&=f(p^{\alpha-a_{\alpha}})
  \ssum{k=1}{p^{a_{\alpha}}\nu}{(k,p)=1}f\bigl((k,p^{a_{\alpha}}\nu)\bigr)
=f(p^{\alpha-a_{\alpha}})
  \ssum{k=1}{p^{a_{\alpha}}\nu}{(k,p)=1}f\bigl((k,\nu)\bigr)\\
  &=f(p^{\alpha-a_{\alpha}})\sum_{k=1}^{p^{a_{\alpha}}\nu}
    f\bigl((k,\nu)\bigr)
    \biggl\lfloor\frac{1}{(k,p)}\biggr\rfloor,
\end{align*}
where $\lfloor x\rfloor$ denotes the floor of $x$, the largest integer that
does not exceed $x$.
Now, it is known that
\begin{equation*}
  \biggl\lfloor\frac{1}{n}\biggr\rfloor=\sum_{d|n}\mu(d)
  =\begin{cases}
   1 & \text{if $n=1$}\\ 0 & \text{if $n>1$} 
  \end{cases}
\end{equation*}
with the M\"obius function
\begin{equation*}
  \mu(n) = \begin{cases} 1 \quad\text{if $n=1,$} \\
(-1)^j\quad\text{if $n=p_1\cdots p_j,$ where the $p_i$s are distinct primes,}\\
0 \quad\text{otherwise,}\end{cases}
\end{equation*}
which we exploit in
\begin{equation*}
   \sum_{k=1}^{p^{a_{\alpha}}\nu}
    f\bigl((k,\nu)\bigr)
    \biggl\lfloor\frac{1}{(k,p)}\biggr\rfloor
=\sum_{k=1}^{p^{a_{\alpha}}\nu}
  f\bigl((k,\nu)\bigr)\sum_{\substack{d|k \\ d|p}} \mu(d)
  =\sum_{d|p}\mu(d)
    \sum_{\substack{k=1\\d|k}}^{p^{a_{\alpha}}\nu}f\bigl((k,\nu)\bigr).
\end{equation*}
Only $d=1$ and $d=p$ contribute to the sum, so that
\begin{align*}
   \sum_{k=1}^{p^{a_{\alpha}}\nu}
    f\bigl((k,\nu)\bigr)
    \biggl\lfloor\frac{1}{(k,p)}\biggr\rfloor
  &=\sum_{k=1}^{p^{a_{\alpha}}\nu}f\bigl((k,\nu)\bigr)
    -\sum_{k=1}^{p^{a_{\alpha}-1}\nu}f\bigl((k,\nu)\bigr)\\
  &=\bigl(p^{a_{\alpha}}-p^{a_{\alpha}-1}\bigr)\xi_f(\nu)
    =\varphi(p^{a_{\alpha}})\xi_f(\nu),
\end{align*}
where it is crucial that $a_{\alpha}\geq1$, and we arrive at
\begin{align*}
  h^{(1)}_{f,p}(p^\alpha \nu)
  &=\xi_f(p^\alpha \nu)
    -\kappa_\alpha f(p^{\alpha-a_{\alpha}})\varphi(p^{a_{\alpha}})\xi_f(\nu)
  \\&=\bigl[\xi_f(p^{\alpha})
-\kappa_\alpha
  f(p^{\alpha-a_{\alpha}})\varphi(p^{a_{\alpha}})\bigr]\xi_f(\nu)
  = h^{(1)}_{f,p}(p^{\alpha}) h^{(1)}_{f,p}(\nu)\,,
\end{align*}
where the last step recognizes that $h^{(1)}_{f,p}(p^{\alpha})=\xi_f(p^{\alpha})
-\kappa_\alpha f(p^{\alpha-a_{\alpha}})\varphi(p^{a_{\alpha}})$ as a
consequence of $\xi_f(1)=1$.
This completes the proof of Theorem~\ref{DEBZ-general-2}.
\end{proof}
\noindent%
Note that ${h^{(1)}_{f,p}(p^\alpha) %
  =\xi_f(p^{\alpha})-\kappa_\alpha\varphi(p^{\alpha})}$
when ${a_{\alpha}=\alpha}$.

\section{Second generalization.}
Here is another generalization of $h(n)$, also confirmed in Section
\ref{sec:confirm}: 
\begin{theorem} \label{DEBZ-general-3}
Choose a multiplicative function $f$ and a prime number $p$, as well as a
sequence of complex numbers $\kappa_0=0$, $\kappa_1$, $\kappa_2$, \dots,
and a sequence of nonnegative integers $a_1$, $a_2$, \dots\  with
$a_{\alpha}\leq\alpha$.
Then the function
\begin{equation*}
  h^{(2)}_{f,p}(n)
  =\xi_f(n)-\kappa_{v_p(n)}\ssum{k=1}{n}{v_p(k)\geq v_p(n)-a_{v_p(n)}}
  f\bigl((k,n)\bigr)  
\end{equation*}
is multiplicative.
\end{theorem}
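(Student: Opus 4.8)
The plan is to follow the proof of Theorem~\ref{DEBZ-general-2} step for step, since $h^{(2)}_{f,p}$ differs from $h^{(1)}_{f,p}$ only in that the equality constraint $v_p(k)=v_p(n)-a_{v_p(n)}$ is relaxed to the inequality $v_p(k)\geq v_p(n)-a_{v_p(n)}$ (and $a_\alpha$ is now allowed to vanish). The case $(mn,p)=1$ is handled verbatim: then $v_p(mn)=v_p(m)=v_p(n)=0$, so $\kappa_{v_p(mn)}=\kappa_0=0$ removes the second term and $h^{(2)}_{f,p}(mn)=\xi_f(mn)=\xi_f(m)\xi_f(n)=h^{(2)}_{f,p}(m)h^{(2)}_{f,p}(n)$ by Corollary~\ref{gcdsum}. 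Everything therefore reduces, for $\alpha>0$ and $(\nu,p)=1$, to evaluating
\[
  S=\ssum{k=1}{p^\alpha\nu}{v_p(k)\geq\alpha-a_\alpha}f\bigl((k,p^\alpha\nu)\bigr)
\]
and showing that it factors as a purely $p$-local quantity times $\xi_f(\nu)$.

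This is exactly where I expect the single genuine obstacle, and where the argument must diverge from the earlier one. In Theorem~\ref{DEBZ-general-2} the equality constraint fixed the $p$-part of the gcd at the constant value $(k,p^\alpha)=p^{\alpha-a_\alpha}$, because $\alpha-a_\alpha\leq\alpha$; one could then pull $f(p^{\alpha-a_\alpha})$ out of the sum and finish with the floor/M\"obius identity, a step for which $a_\alpha\geq1$ was essential. Under the inequality constraint the exponent $v_p(k)$ instead runs through all integers $\geq\alpha-a_\alpha$, so $(k,p^\alpha)=p^{\min(v_p(k),\alpha)}$ is no longer constant: it grows like $p^{v_p(k)}$ and then saturates at $p^\alpha$. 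Controlling this saturation is the crux of the proof.

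My preferred way to resolve it is to reorganize $S$ according to the value of the complete gcd $d=(k,p^\alpha\nu)$ rather than according to $k$. Writing $d=p^e\delta$ with $0\leq e\leq\alpha$ and $\delta\mid\nu$, there are $\varphi(p^{\alpha-e})\varphi(\nu/\delta)$ values of $k$ in $[1,p^\alpha\nu]$ with $(k,p^\alpha\nu)=d$, and one checks that every such $k$ has $v_p(k)=e$ when $e<\alpha$ but $v_p(k)\geq\alpha$ when $e=\alpha$. Consequently the constraint $v_p(k)\geq\alpha-a_\alpha$ is met by \emph{all} of the $k$ in a gcd-class or by \emph{none} of them: by all precisely when $e\geq\alpha-a_\alpha$, which is automatic once $e=\alpha$ since $\alpha-a_\alpha\leq\alpha$. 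This is the point at which the min-saturation disappears cleanly, and it explains why the relaxation to $a_\alpha=0$ costs nothing. Using $f(p^e\delta)=f(p^e)f(\delta)$ together with $\sum_{\delta\mid\nu}f(\delta)\varphi(\nu/\delta)=\xi_f(\nu)$, the surviving classes $e\in\{\alpha-a_\alpha,\dots,\alpha\}$ give
\[
  S=\Biggl(\sum_{e=\alpha-a_\alpha}^{\alpha}f(p^e)\varphi(p^{\alpha-e})\Biggr)\xi_f(\nu).
\]

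It then remains only to assemble the pieces. The parenthesized $p$-local factor is exactly what one obtains by setting $\nu=1$, that is, the constrained sum attached to $n=p^\alpha$; denote it $S_\alpha$, so that $S=S_\alpha\,\xi_f(\nu)$. Combined with $\xi_f(p^\alpha\nu)=\xi_f(p^\alpha)\xi_f(\nu)$ from Corollary~\ref{gcdsum}, this yields
\[
  h^{(2)}_{f,p}(p^\alpha\nu)=\bigl[\xi_f(p^\alpha)-\kappa_\alpha S_\alpha\bigr]\xi_f(\nu).
\]
Recognizing the bracket as $h^{(2)}_{f,p}(p^\alpha)$ and using $h^{(2)}_{f,p}(\nu)=\xi_f(\nu)$ (since $\kappa_{v_p(\nu)}=\kappa_0=0$) completes the factorization $h^{(2)}_{f,p}(p^\alpha\nu)=h^{(2)}_{f,p}(p^\alpha)h^{(2)}_{f,p}(\nu)$, and hence the proof of Theorem~\ref{DEBZ-general-3}.
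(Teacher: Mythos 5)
Your proof is correct, and in its central computation it takes a genuinely different route from the paper's. The paper first substitutes $k=p^{\alpha-a_{\alpha}}k'$ to absorb the guaranteed power of $p$, then splits the resulting sum according to the exact value of $v_p(k')$, and disposes of each slice by the auxiliary identity $\sum_{k\le p^b\nu,\,(k,p)=1}f\bigl((k,\nu)\bigr)=\varphi(p^b)\xi_f(\nu)$, which was itself established in the proof of Theorem~\ref{DEBZ-general-2} via the M\"obius-function/floor device. You instead partition $\{1,\dots,p^{\alpha}\nu\}$ into gcd classes $(k,p^{\alpha}\nu)=p^e\delta$, observe that the constraint $v_p(k)\ge\alpha-a_{\alpha}$ is constant on each class (the saturation at $e=\alpha$ being harmless because $a_{\alpha}\ge0$), and count each surviving class by $\varphi(p^{\alpha-e})\varphi(\nu/\delta)$, which lands directly on the convolution identity $\xi_f=f*\varphi$ from Corollary~\ref{gcdsum}. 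Both computations produce the same local factor $\sum_{b=0}^{a_{\alpha}}f\bigl(p^{\alpha-b}\bigr)\varphi(p^b)$ and hence the same value of $h^{(2)}_{f,p}(p^{\alpha})$. What your version buys is self-containedness---no appeal to the intermediate lemma extracted from the earlier proof and no M\"obius function---together with a transparent explanation of why $a_{\alpha}=0$ is now permitted; what the paper's version buys is uniformity with the proof of Theorem~\ref{DEBZ-general-2}, whose machinery it reuses wholesale.
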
\enlargethispage{1.0\baselineskip}
\begin{proof}
Since $h^{(2)}_{f,p}(n)=\xi_f(n)$ when $(n,p)=1$, we need to show that
\begin{equation}\label{X}
  h^{(2)}_{f,p}(p^\alpha \nu) 
  = h^{(2)}_{f,p}(p^\alpha)\xi_f(\nu)
\end{equation}
for $\alpha>0$ and $(\nu,p)=1$.
We have
\begin{align*}
 &\hphantom{=\ } \ssum{k=1}{p^{\alpha}\nu}{v_p(k)\geq \alpha-a_{\alpha}}
  f\bigl((k,p^{\alpha}\nu)\bigr)
 =\sum_{k=1}^{p^{a_{\alpha}}\nu}f\bigl((p^{\alpha-a_{\alpha}}k,p^{\alpha}\nu)\bigr)
\\
  &={\left[\enskip\ssum{k=1}{p^{a_{\alpha}}\nu}{v_p(k)=0}\quad
    +\ssum{k=1}{p^{a_{\alpha}}\nu}{v_p(k)=1}+\cdots
    +\ssum{k=1}{p^{a_{\alpha}}\nu}{v_p(k)=a_{\alpha}-1}\qquad
    +\ssum{k=1}{p^{a_{\alpha}}\nu}{v_p(k)\geq a_{\alpha}}\quad\enskip
    \right]}f\bigl((p^{\alpha-a_{\alpha}}k,p^{\alpha}\nu)\bigr)\\
  &=\sum_{b=1}^{a_{\alpha}}
    \ssum{k=1}{p^{b}\nu}{v_p(k)=0}f\bigl((p^{\alpha-b}k,p^{\alpha}\nu)\bigr)
    +\sum_{k=1}^{\nu}f\bigl((p^{\alpha}k,p^{\alpha}\nu)\bigr)\\
  &=\sum_{b=1}^{a_{\alpha}}f\bigl(p^{\alpha-b}\bigr)
    \ssum{k=1}{p^{b}\nu}{(k,p)=1}f\bigl((k,\nu)\bigr)
    +f(p^{\alpha})\sum_{k=1}^{\nu}f\bigl((k,\nu)\bigr)\\
  &=\sum_{b=1}^{a_{\alpha}}f\bigl(p^{\alpha-b}\bigr)\varphi(p^b)\xi_f(\nu)
    +f(p^{\alpha})\xi_f(\nu),
\end{align*}
where we recall, from the proof of Theorem \ref{DEBZ-general-2}, that
\begin{equation*}
  \ssum{k=1}{p^b\nu}{(k,p)=1}f\bigl((k,\nu)\bigr)=\varphi(p^b)\xi_f(\nu)
  \quad\text{for $b>0$}.
\end{equation*}
It follows that \eqref{X} holds with
$h^{(2)}_{f,p}(p^\alpha)=\xi_f(p^{\alpha})-\kappa_{\alpha}
\raisebox{0pt}[0pt][0pt]{$\displaystyle\sum_{b=0}^{a_{\alpha}}$}
f\bigl(p^{\alpha-b}\bigr)\varphi(p^b)$,
which concludes the proof.
\end{proof}
\noindent%
Note that ${h^{(2)}_{f,p}(p^\alpha)=(1-\kappa_{\alpha})\xi_f(p^{\alpha})}$
when ${a_{\alpha}=\alpha}$.

\section{Generalizations confirmed}\label{sec:confirm}
We now confirm that $h^{(1)}_{f,p}(n)$ and $h^{(2)}_{f,p}(n)$, introduced in
Theorems \ref{DEBZ-general-2} and \ref{DEBZ-general-3}, are generalizations of
$h(n)$ in \eqref{DEBZ-h}.

In view of the important role played by the privileged prime $p$, we write
$n=p^{v_p(n)}\bigl(p^{-v_p(n)}n\bigr)$  and note that
\begin{align*}
  h(n)&=h\bigl(2^{v_2(n)}\bigr)\xi_s\bigl(2^{-v_2(n)}n\bigr),\\
  h^{(1)}_{f,p}(n)&=h^{(1)}_{f,p}\bigl(p^{v_p(n)}\bigr)
                    \xi_f\bigl(p^{-v_p(n)}n\bigr),\\
  h^{(2)}_{f,p}(n)&=h^{(2)}_{f,p}\bigl(p^{v_p(n)}\bigr)
                    \xi_f\bigl(p^{-v_p(n)}n\bigr),
\end{align*}
with
\begin{eqnarray}\label{R1}\notag
  h\bigl(2^{\alpha}\bigr)&=&\xi_s\bigl(2^{\alpha}\bigr)-2^{\alpha/2-1}
                           \quad\text{for $\alpha\geq1$},\\
  h^{(1)}_{f,p}\bigl(p^{\alpha}\bigr)&=&\xi_f\bigl(p^{\alpha}\bigr)
     -\kappa_\alpha f\bigl(p^{\alpha-a_{\alpha}}\bigr)
                                       \varphi\bigl(p^{a_{\alpha}}\bigr),
  \notag\\
  h^{(2)}_{f,p}(p^\alpha)&=&\xi_f(p^{\alpha})-\kappa_{\alpha}
  \sum_{b=0}^{a_{\alpha}}f\bigl(p^{\alpha-b}\bigr)\varphi(p^b).
\end{eqnarray}
Therefore, we obtain ${h_{f,p}^{(1)}(n)=h(n)}$ and ${h_{f,p}^{(2)}(n)=h(n)}$
for $f(n)=\sqrt{n}=s(n)$ and $p=2$, if we choose $\kappa_{\alpha}$ in
accordance with
\begin{align*}
  \kappa_{\alpha}&=2^{-a_{\alpha}/2}\quad\text{for $a_{\alpha}\geq1$ in}\
                   h^{(1)}_{s,2}(n),\\
  \kappa_{\alpha}&=\Bigl(2^{1+a_{\alpha}/2}+2^{(1+a_{\alpha})/2}-2^{1/2}\Bigr)^{-1}
                   \quad\text{for $a_{\alpha}\geq0$ in}\ h^{(2)}_{s,2}(n),
\end{align*}
for all $\alpha\geq0$.
Since these assignments work for all permissible choices for the
$a_{\alpha}$s, we have multiple generalizations of $h(n)$ from both
$h_{f,p}^{(1)}(n)$ and $h_{f,p}^{(2)}(n)$, although the conditions
${v_p(k)=v_p(n)-a_{v_p(n)}}$ in Theorem \ref{DEBZ-general-2} and 
${v_p(k)\geq v_p(n)-a_{v_p(n)}}$ in Theorem \ref{DEBZ-general-3} do not really
match the condition $v_2(k)=v_2(n)$ in \eqref{DEBZ-h} because $a_{\alpha}=0$
is not allowed in Theorem \ref{DEBZ-general-2}.
Clearly, then, \eqref{DEBZ-h} is just one of many ways of rewriting $h(n)$ of
\eqref{DEBZ_h}.

\section{Each \lowercase{$h^{(1)}_{f,p}$} is a \lowercase{$h^{(2)}_{f,p}$} and %
  vice versa} 
The mappings ${(f,p)\mapsto h^{(1)}_{f,p}}$ and ${(f,p)\mapsto h^{(2)}_{f,p}}$
are both characterized by a sequence of $\kappa_{\alpha}$s and a
sequence of $a_{\alpha}$s, and---for given $f$ and~$p$---one can
always adjust the sequences of one of them to the sequences of the other such
that the right-hand sides in \eqref{R1} are the same,
${h^{(1)}_{f,p}\bigl(p^{\alpha}\bigr)=h^{(2)}_{f,p}\bigl(p^{\alpha}\bigr)}$.
In this sense, all the multiplicative functions in the $h^{(1)}_{f,p}$
family are also contained in the $h^{(2)}_{f,p}$ family, and vice versa,
although the two mappings are really different. 

To justify this remark, we shall write $\kappa^{(1)}_{\alpha}$ and
$a^{(1)}_{\alpha}$ for the parameters that specify $h^{(1)}_{f,p}$ and
$\kappa^{(2)}_{\alpha}$ and $a^{(2)}_{\alpha}$ for those of $h^{(2)}_{f,p}$.
Then, for a particular choice of the $\kappa^{(1)}_{\alpha}$s and
$a^{(1)}_{\alpha}$s, we put $\kappa^{(2)}_{\alpha}=0$ if
$\kappa^{(1)}_\alpha f\bigl(p^{\alpha-a^{(1)}_{\alpha}}\bigr)=0$;
otherwise we choose either $a^{(2)}_{\alpha}=a^{(1)}_{\alpha}$ or
$a^{(2)}_{\alpha}=a^{(1)}_{\alpha}-1\geq0$ such that
$F=\raisebox{0pt}[\height][10pt]%
   {$\displaystyle\sum_{b=0}^{a^{(2)}_{\alpha}}$}%
   f\bigl(p^{\alpha-b}\bigr)\varphi(p^b)\neq0$
and put
$\kappa^{(2)}_{\alpha}=F^{-1}\kappa^{(1)}_\alpha %
f\bigl(p^{\alpha-a^{(1)}_{\alpha}}\bigr)\varphi\bigl(p^{a^{(1)}_{\alpha}}\bigr)$.
Conversely, for a particular choice of the $\kappa^{(2)}_{\alpha}$s and
$a^{(2)}_{\alpha}$s, we put $a^{(1)}_{\alpha}=\alpha$ and
$\kappa^{(1)}_{\alpha}=\varphi\bigl(p^{\alpha}\bigr)^{-1}\kappa^{(2)}_{\alpha}%
  \raisebox{0pt}[\height][0pt]{$\displaystyle\sum_{b=0}^{a^{(2)}_{\alpha}}$}%
  f\bigl(p^{\alpha-b}\bigr)\varphi(p^b)$.
These assignments ensure that
${h^{(1)}_{f,p}\bigl(p^{\alpha}\bigr)=h^{(2)}_{f,p}\bigl(p^{\alpha}\bigr)}$.

\section{A linear-algebra proof of Theorem \ref{Gauss-sum}}\label{sec:MUB}
We revisit here the linear-algebra proof of Theorem \ref{Gauss-sum}.
While we follow the reasoning in \cite{DEBZ}, where Theorem~\ref{Gauss-sum} is
a side issue and the ingredients are widely scattered, the presentation here
is self-contained and adopts somewhat simpler conventions, in particular for
the phase factor in the definition of $\MC_m$ in \eqref{def-Cm} below.

\heading{Getting started: Columns and rows, matrices, eigenvector bases}
We consider column vectors with $n$ complex entries (${n\geq2}$), their
adjoint row vectors, and the $n\times n$ matrices that implement linear
mappings of columns to columns and rows to rows.
For any two columns $\col{x}$ and $\col{y}$, we denote the adjoint rows by
$\col{x}\adj$ and $\col{y}\adj$;
a row-times-column product such as $\col{x}\adj\col{y}$ is a complex number
that can be understood as the inner product of the columns $\col{x}$ and
$\col{y}$, or of the rows $\col{x}\adj$ and $\col{y}\adj$, whereby
$\col{x}\adj\col{x}\geq0$ with ``$=$'' only for $\col{x}=\col{0}$. 
The row-times-column products such as $\col{y}\col{x}\adj$ are $n\times n$
matrices with $\tr{\col{y}\col{x}\adj}=\col{x}\adj\col{y}$ for the matrix trace.
We recall that $(\col{x}\adj)\adj=\col{x}$,
$(\col{x}\adj\col{y})\adj=\col{y}\adj\col{x}$, 
and $(\col{y}\col{x}\adj)\adj=\col{x}\col{y}\adj$.

Following H.~Weyl \cite[Sec.~IV.D.14]{Weyl} and J.~Schwinger
\cite[Sec.~1.14]{Schwinger}, our basic ingredients are two related
unitary $n\times n$ matrices $\MA$ and $\MB$ of period $n$, that is
\begin{equation*}
  \MA^k=\unity,\quad \MB^k=\unity\quad
  \text{if $k\equiv0\ (\text{mod}\ n)$ and only then},
\end{equation*}
where $\unity$ is the $n\times n$ unit matrix.
The eigenvalues of $\MA$ and $\MB$ are the powers of $\zeta_n$, the basic $n$th
root of unity that appears in Theorem \ref{Gauss-sum}.
We denote the $j$th eigencolumn of $\MA$ by $\ca_j$ and the $k$th eigencolumn of
$\MB$ by $\cb_k$, 
\begin{equation*}
  \MA\ca_j=\ca_j\,\zeta_n^j,\quad \MB\cb_k=\cb_k\,\zeta_n^k,\qquad
  \ca_j\adj\MA=\zeta_n^j\,\ca_j\adj,\quad\cb_k\adj\MB=\zeta_n^k\,\cb_k\adj,
\end{equation*}
where we regard the labels as modulo-$n$ integers, so that $\ca_{j+n}=\ca_j$
and $\cb_{k+n}=\cb_k$. 
The sets of eigencolumns are orthonormal and complete,
\begin{align*}
  \text{orthonormality:}&\quad
  \ca_j\adj \ca_k\phadj=\delta^{(n)}_{j,k}
  =\left\{
  \begin{array}{@{}l@{\enskip}l@{}}
    1 & \text{if}\ \zeta_n^j=\zeta_n^k \\[1ex]
    0 & \text{if}\ \zeta_n^j\neq\zeta_n^k
  \end{array}\right\}
  =\frac{1}{n}\sum_{l=0}^{n-1}\zeta_n^{(j-k)l}=\cb_j\adj \cb_k\phadj,\\
    \text{completeness:}&\quad
           \sum_{j=0}^{n-1}\ca_j\phadj \ca_j\adj=\unity
          =\sum_{k=0}^{n-1}\cb_k\phadj \cb_k\adj,
\end{align*}
where $\delta^{(n)}_{j,k}$ is the modulo-$n$ version of the Kronecker delta
symbol, and the projection matrices associated with the eigenvectors are
\begin{equation}\label{A1}
  \ca_j\phadj \ca_j\adj=\frac{1}{n}\sum_{k=0}^{n-1}
                         \Bigl(\zeta_n^{-j}\MA\Bigr)^k,\quad
  \cb_k\phadj \cb_k\adj=\frac{1}{n}\sum_{l=0}^{n-1}\Bigl(\zeta_n^{-k}\MB\Bigr)^l.
\end{equation}
The unitary matrices $\MA$ and $\MB$ are related to each other by the discrete
Fourier transform that turns one set of eigenvectors into the other,
\begin{equation*}
  \ca_j\adj \cb_k\phadj=\frac{1}{\sqrt{n}}\zeta_n^{jk},
  \quad \cb_k\phadj=\frac{1}{\sqrt{n}}\sum_{j=0}^{n-1}\ca_j\zeta_n^{jk},
    \quad \ca_j\adj=\frac{1}{\sqrt{n}}\sum_{k=0}^{n-1}\zeta_n^{jk}\cb_k\adj.
\end{equation*}
As a consequence, we have the following identities:
\begin{eqnarray}\label{A2}
 & \ca_j\adj \MB= \ca_{j+1}\adj,\quad \MA\cb_k=\cb_{k+1},\quad
   \tr{\MA^j\MB^k}=n\,\delta^{(n)}_{j,0}\,\delta^{(n)}_{k,0},&\nonumber\\
    &\zeta_n^{jk}\MA^j\MB^k=\MB^k\MA^j,\quad
    (\MA^j\MB^k)^l=\zeta_{2n}^{jk(l-1)l}\MA^{jl}\MB^{kl};&
\end{eqnarray}
we leave their verification to the reader as an exercise.

\heading{More unitary matrices and their eigenstate bases}
For $m=1,2,\dots,n-1$, we define
\begin{equation*}\label{def-Cm}
  \MC_m=\zeta_{2n}^{-(n-1)m}\MA\MB^m ,
\end{equation*}
which are unitary matrices of period $n$, 
\begin{equation}\label{A3}
  \MC_m^k=\zeta_{2n}^{-(n-1)mk}(\MA\MB^m)^k=\zeta_{2n}^{-(n-k)mk}\MA^k\MB^{mk}
  \xrightarrow{k=n}\unity.
\end{equation}
Upon denoting the $j$th eigencolumn of $\MC_m$ by $\cc_{m,j}=\cc_{m,j+n}$, we
have $\MC_m\cc_{m,j}=\cc_{m,j}\zeta_n^j$ and
$\cc_{m,j}\adj\MC_m=\zeta_n^j\cc_{m,j}\adj$ as well as 
\begin{equation}\label{A4}
  \cc_{m,j}\adj \cc_{m,k}\phadj=\delta^{(n)}_{j,k},\quad
  \sum_{j=0}^{n-1} \cc_{m,j}\phadj \cc_{m,j}\adj=\unity,\quad
  \cc_{m,j}\phadj \cc_{m,j}\adj
  =\frac{1}{n}\sum_{k=0}^{n-1}\Bigl(\zeta_n^{-j}\MC_m\Bigr)^k. 
\end{equation}

To establish how the $\cc_{m_j}$s are related to the $\ca_j$s and $\cb_k$s, we
first infer $\cc_{m,j}\adj \cb_k\phadj$ from the recurrence relation
\begin{equation*}
  \cc_{m,j}\adj \cb_{k+1}\phadj= \cc_{m,j}\adj \MA \cb_k\phadj
  = \cc_{m,j}\adj\zeta_{2n}^{(n-1)m}\MC_m\MB^{-m}\cb_k\phadj
  = \cc_{m,j}\adj \cb_k\phadj \,\zeta_{2n}^{(n-1)m}\,\zeta_n^{j-mk},
\end{equation*}
which yields
\begin{equation*}
   \cc_{m,j}\adj \cb_k\phadj
  =\cc_{m,j}\adj \cb_0\phadj \,\zeta_{2n}^{(n-1)mk}\,\zeta_n^{jk-mk(k-1)/2}
  = \frac{1}{\sqrt{n}}\,\zeta_n^{jk}\zeta_{2n}^{(n-k)km},
\end{equation*}
where we adopt $\cc_{m,j}\adj \cb_0\phadj=1/\sqrt{n}$ by convention.
Then we exploit the completeness of the $\cb_k$s in
\begin{equation}\label{A-Gauss}
  \ca_j\adj \cc_{m,k}\phadj
  =\sum_{l=0}^{n-1}\ca_j\adj \cb_l\phadj \cb_l\adj \cc_{m,k}\phadj
   =\frac{1}{n}\sum_{l=0}^{n-1}\zeta_n^{(j-k)l}\zeta_{2n}^{-(n-l)lm}.
\end{equation}
This Gauss sum is our first ingredient.

Next we utilize ${\col{x}\adj\col{y}=\tr{\col{y}\col{x}\adj}}$ and the
linearity of the trace as well as statements in \eqref{A1}--\eqref{A4} in 
\begin{align}\label{A6}\notag
  \bigl| \ca_j\adj \cc_{m,k}\phadj\bigr|^2
  &= \ca_j\adj \cc_{m,k}\phadj \cc_{m,k}\adj \ca_j\phadj
    = \tr{ \ca_j\phadj \ca_j\adj\; \cc_{m,k}\phadj \cc_{m,k}\adj}\\ \notag
  &= \frac{1}{n^2}\sum_{l,l'=0}^{n-1}
    \tr{ \Bigl(\zeta_n^{-j}\MA\Bigr)^{l'}\Bigl(\zeta_n^{-k}\MC_m\Bigr)^l}\\ \notag
  &= \frac{1}{n^2}\sum_{l,l'=0}^{n-1}
    \zeta_n^{-jl'-kl}\zeta_{2n}^{-(n-l)lm}
    \tr{\MA^{l'+l}\MB^{ml}}\\
  &= \frac{1}{n}\sum_{l=0}^{n-1}
    \zeta_n^{(j-k)l}\zeta_{2n}^{-(n-l)lm}\delta^{(n)}_{ml,0}.
\end{align}
Now writing $d=(m,n)$, $n=d\nu$, $m=d\mu$ with $(\mu,\nu)=1$, we have
\begin{equation*}
  \delta^{(n)}_{ml,0}=\delta^{(\nu)}_{l,0}
\end{equation*}
because ${ml\equiv0\ (\text{mod}\ n)}$ requires
${l\equiv0\ (\text{mod}\ \nu)}$.
Therefore, only the terms with $l=0,\nu,2\nu,\ldots,(d-1)\nu$ contribute
to the final sum in \eqref{A6}, and we arrive at
\begin{align*}
 n \bigl| \ca_j\adj \cc_{m,k}\phadj\bigr|^2
 & =\sum_{l=0}^{n-1}\zeta_n^{(j-k)l}\zeta_{2n}^{-(n-l)lm}\delta^{(\nu)}_{l,0}
   =\sum_{l=0}^{d-1}\zeta_n^{(j-k)l\nu}\zeta_{2n}^{-(n-l\nu)l\nu m}\\
  & =\sum_{l=0}^{d-1}\zeta_d^{(j-k)l}(-1)^{(d-1)\mu\nu l}
  = \left\{\begin{array}{@{}l@{\quad}l@{}}
     d\,\delta^{(d)}_{j,k+d/2} & \text{if $(d-1)\mu\nu$ is odd,}\\[1ex]
     d\,\delta^{(d)}_{j,k} & \text{if $(d-1)\mu\nu$ is even.}
    \end{array}\right.
\end{align*}
Upon combining this second ingredient with the first in \eqref{A-Gauss}, we
conclude that
\begin{equation*}
  \frac{1}{\sqrt{n}}\left|\sum_{l=0}^{n-1}
    \zeta_n^{(j-k)l}\zeta_{2n}^{(n-l)lm}\right|
  =\left\{\begin{array}{@{}l@{\enskip}l@{}}
            \sqrt{(m,n)}\,\delta^{((m,n))}_{j,k+(m,n)/2}
            & \text{if $v_2(n)=v_2(m)\geq1$}\\[1ex]
            \sqrt{(m,n)}\,\delta^{((m,n))}_{j,k}  & \text{otherwise.}
    \end{array}\right.\quad
\end{equation*}
For $j=k$, this is the statement in Theorem \ref{Gauss-sum}.

\clearpage  

In passing, we found the following identity between the absolute value of a
Gauss sum and a particular partial sum:
\begin{corollary}\label{Gauss-sum-identity}
For all integers $k,m,n$ with $n\geq1$, we have
  \begin{equation*}
    \left|\frac{1}{n}\sum_{l=0}^{n-1}
      \zeta_n^{kl}\zeta_{2n}^{(n-l)lm}
      \right|^2
    =\frac{1}{n}\sum_{l=0}^{n-1}
    \zeta_n^{kl}\zeta_{2n}^{(n-l)lm}\delta^{(n)}_{ml,0}.
  \end{equation*}
\end{corollary}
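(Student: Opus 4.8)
The plan is to prove the identity by expanding the left-hand side as the Gauss sum times its complex conjugate and reducing the resulting double sum purely arithmetically, so that the argument holds for all integers $k,m$ and every $n\geq1$. Write $S=\frac{1}{n}\sum_{l=0}^{n-1}\zeta_n^{kl}\zeta_{2n}^{(n-l)lm}$ for the sum in question; the claim is then that $|S|^2=\frac{1}{n}\sum_{l=0}^{n-1}\zeta_n^{kl}\zeta_{2n}^{(n-l)lm}\delta^{(n)}_{ml,0}$. The observation that makes everything work is that the summand $g(l)=\zeta_n^{kl}\zeta_{2n}^{(n-l)lm}$ is periodic in $l$ with period $n$: forming $g(l+n)/g(l)$ leaves only the factor $\zeta_{2n}^{-2lnm}=(\zeta_{2n}^{2n})^{-lm}=1$. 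Consequently every index sum below may be taken over a complete residue system modulo $n$, which is precisely what licenses the change of summation variable in the main step.

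First I would write $|S|^2=S\overline{S}=\frac{1}{n^2}\sum_{l,l'}\zeta_n^{k(l-l')}\zeta_{2n}^{m[(n-l)l-(n-l')l']}$, with $l$ and $l'$ both running modulo $n$, and then substitute $l=l'+r$ with $r$ running modulo $n$ as well. The key simplification is that the quadratic terms in $l'$ cancel: one finds $(n-l)l-(n-l')l'=r(n-2l'-r)$, so the exponent of $\zeta_{2n}$ becomes $mr(n-r)-2mrl'$ and the full $l'$-dependence collapses to the single phase $\zeta_{2n}^{-2mrl'}=\zeta_n^{-mrl'}$ (alongside $\zeta_n^{k(l-l')}=\zeta_n^{kr}$). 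Carrying out the geometric sum $\sum_{l'}\zeta_n^{-mrl'}=n\,\delta^{(n)}_{mr,0}$ and relabeling $r\to l$ then returns exactly the right-hand side of the corollary.

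The only real obstacle is the exponent bookkeeping in that middle step: checking that the $-l^2$ and $+l'^2$ pieces combine under $l=l'+r$ with no residual quadratic term in $l'$, and correctly passing from $\zeta_{2n}^{-2mrl'}$ to $\zeta_n^{-mrl'}$. Once the $n$-periodicity of $g$ is secured, this is routine. I would also point out that the computation is essentially \eqref{A6} stripped of its linear-algebra scaffolding, where the right-hand side appeared as $n\bigl|\ca_j\adj\cc_{m,k}\bigr|^2$ for $1\leq m\leq n-1$; the present direct argument needs no restriction on $m$ and handles $n=1$ trivially. Finally, the sign of the $\zeta_{2n}$ exponent is immaterial, since replacing $(k,m)$ by $(-k,-m)$ conjugates both sides at once.
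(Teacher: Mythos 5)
Your proposal is correct and coincides with the paper's own direct verification: the same $n$-periodicity observation in $l$, the same shift of summation variable ($l=l'+r$ versus the paper's $l\to l+l'$), the same cancellation of the quadratic term, and the same collapse of the geometric sum over $l'$ to $n\,\delta^{(n)}_{mr,0}$. The only cosmetic difference is that the paper first derives the identity for $0<m<n$ by comparing \eqref{A-Gauss} with \eqref{A6} and offers the direct computation as a second route, whereas you go straight to the direct computation, which indeed handles all integers $k,m$ and $n\geq1$ uniformly.
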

\begin{proof}
For $m$ values in the range ${0<m<n}$, this follows from comparing
\eqref{A-Gauss} and \eqref{A6}, and the case of ${m=0}$ is immediate.
Then, the observation that $\zeta_n^{kl}\zeta_{2n}^{(n-l)lm}$ does not change
if we replace $m$ by $m\pm n$, in conjunction with replacing $k$ by
$k+\frac{1}{2}n$ if $n$ is even, extends the permissible $m$ values to all
integers.  

The identity can, of course, also be verified directly.
We note that $\zeta_n^{kl}\zeta_{2n}^{(n-l)lm}$ does not change when $l$ is
replaced by $l\pm n$ and, therefore, the summation over $l$ can cover any range
of $n$ successive integers. 
Accordingly, we can replace $l$ by $l+a$ for any integer $a$ without changing
the value of the sum. 
We exploit this when writing the left-hand side as a double sum
and then processing it,
\begin{align*}
  &\;\frac{1}{n}\sum_{l'=0}^{n-1}
                      \zeta_n^{-kl'}\zeta_{2n}^{-(n-l')l'm}
              \frac{1}{n}\sum_{l=0}^{n-1}
                      \zeta_n^{kl}\zeta_{2n}^{(n-l)lm}
  \\=&\;\frac{1}{n^2}\sum_{l,l'=0}^{n-1}
              \zeta_n^{-kl'}\zeta_{2n}^{-(n-l')l'm}
              \zeta_n^{k(l+l')}\zeta_{2n}^{(n-l-l')(l+l')m}
  \\=&\;\frac{1}{n}\sum_{l=0}^{n-1}\zeta_n^{kl}\zeta_{2n}^{(n-l)lm}
       \frac{1}{n}\sum_{l'=0}^{n-1}\zeta_n^{-ll'm}
     =\frac{1}{n}\sum_{l=0}^{n-1}
          \zeta_n^{kl}\zeta_{2n}^{(n-l)lm}\delta^{(n)}_{ml,0},
\end{align*}
thereby arriving at the right-hand side.
\end{proof}

\heading{Remark: Unbiased bases}
The eigenvector bases for the matrices $\MA$ and $\MB$ are such that
$\bigl|\ca_j\adj \cb_k\phadj\bigr|$ has the same value for all rows
$\ca_j\adj$ and columns $\cb_k$, which is the defining property of a pair of
\emph{unbiased} bases. 
Further, for each $m$, the basis
$\mathcal{C}_m=\mathop{\{\cc_{m,k}\}}_{k=1}^{n}$ is unbiased with the basis
$\mathcal{B}=\mathop{\{\cb_k\}}_{k=1}^n$.
When $n$ is prime, each $\mathcal{C}_m$ is also unbiased with the basis
$\mathcal{A}=\mathop{\{\ca_j\}}_{j=1}^n$. 
When $n$ is not prime, however, then some of the $\mathcal{C}_m$s
are unbiased with $\mathcal{A}$, namely those with $(m,n)=1$, and the others
are not.
Further, two bases $\mathcal{C}_m$ and $\mathcal{C}_{m'}$ with $m'>m$ are
unbiased if $(m'-m,n)=1$, and only then, because
$\cc_{m,j}\adj\cc_{m',k}\phadj =\ca_j\adj\cc_{m'-m,k}\phadj$.
We refer the reader to \cite{DEBZ} for a detailed discussion of unbiased bases.

\enlargethispage{1.0\baselineskip} 

\section{Yet another multiplicative function}
Here we report one more multiplicative function that is suggested by
$h(n)$ in \eqref{DEBZ-h}, but is not a generalization in the spirit of
$h^{(1)}_{f,p}(n)$ and $h^{(2)}_{f,p}(n)$ in Theorems \ref{DEBZ-general-2} and
\ref{DEBZ-general-3}.

In preparation, and for the record, we note that the Gauss sum in
Theorem~\ref{Gauss-sum},
\begin{equation}\label{Smn}
  S(m,n)=\frac{1}{\sqrt{n}} \sum_{l=0}^{n-1} \zeta_{2n}^{(n-l)l m},
\end{equation}
can be evaluated.
We write $\nu=n/(m,n)$ and $\mu=m/(m,n)$ as above and distinguish three cases:
\begin{enumerate}
\item[(a)]  If $v_2(n)=0$ or $v_2(m)>v_2(n)>0$, then $\nu$ is odd and $(n-1)\mu$
is even, and we have
\begin{equation*}
  S(m,n)
  =\begin{cases}
  \ \bigl(\frac{1}{2}(n-1)\mu|\nu\bigr)\sqrt{(m,n)}
  & \text{if $i^{\nu}=i$,}\\[1ex]
  i\bigl(\frac{1}{2}(n-1)\mu|\nu\bigr)\sqrt{(m,n)}
  & \text{if $i^{\nu}=-i$.}
  \end{cases}
\end{equation*}
\item[(b)] If $v_2(n)>v_2(m)$, then $(n-1)\mu$ is odd, and we have
\begin{equation*}
  S(m,n)=e^{\pm i\pi/4}\bigl(2\nu|(n-1)\mu\bigr)\sqrt{(m,n)}  
  \quad\text{for}\  i^{(n-1)\mu}=\pm i.
\end{equation*}
\item[(c)] If $v_2(n)=v_2(m)\geq 1$, we have
\begin{equation*}
   S(m,n) =0.
\end{equation*}
\end{enumerate}
Here $(j|k)$ is the familiar Jacobi symbol
\cite[Sec.~9.7]{Apostol}; all $(j|k)$ appearing here are
equal to $+1$ or $-1$ because the arguments are co-prime in each case.
We leave the proof to the reader as an exercise in the evaluation of Gauss
sums, and the sum in Corollary~\ref{Gauss-sum-identity} for $k\neq0$ is
another exercise.
The facts  required to complete these proofs can be found in
\cite[Sec.~1.5]{Berndt-Evans-Williams}.
The linear-algebra argument in Section~\ref{sec:MUB} establishes the absolute
value of the right-hand side, namely $\sqrt{(m,n)}$ in (a) and (b) and $0$ in
(c), but not its complex phase.

Now, harking back to $h(n)$ in \eqref{DEBZ-h}, we observe that
\begin{equation*}
  h(n) = \sum_{m=1}^{n} \sqrt{(m, n)} = \sum_{m=1}^{n} |S(m, n)|
  \quad\text{for $n$ odd,}
\end{equation*}
and there is an extra term when $n$ is even.
Since $\displaystyle\sum_{m=1}^{n} \sqrt{(m,n)}$ is a multiplicative function
of $n$ for all positive $n$, odd or even, it is natural for us to ask if
\begin{equation*}
    s(n)=\sum_{m=1}^{n} S(m,n)
\end{equation*}
is a multiplicative function of $n$,
too.
While this turns out to be false (see for example the case of $n=6=2\times3$),
it \emph{is} true for odd $n$, for which $s(n)$ is real.
For even $n$, an extra term is required in addition to discarding the
imaginary part of  $s(n)$, reminiscent of the even-$n$ modification in
\eqref{DEBZ-h}:
\begin{theorem}\label{h-sharp}
With the Gauss sum $S(m,n)$ in \eqref{Smn}, the function
  \begin{equation*}
    h^{\#}(n)= \sum_{m=1}^{n} \mathrm{Re}\bigl(S(m,n)\bigr)
      +\left\{\begin{array}{@{}ll@{}}\displaystyle
            \frac{1}{2}\sqrt{n} & \text{if $n$ is even}\\[2ex]
            0 & \text{if $n$ is odd}
  \end{array}\right\}
  \end{equation*}
is multiplicative.
\end{theorem}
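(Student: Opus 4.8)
The plan is to carry out the $m$-summation in closed form. Writing $s(n)=\sum_{m=1}^{n}S(m,n)$, so that $\sum_{m=1}^{n}\mathrm{Re}(S(m,n))=\mathrm{Re}\,s(n)$, I would interchange the order of summation and treat each inner sum as a finite geometric series,
\[
 s(n)=\frac{1}{\sqrt n}\sum_{l=0}^{n-1}T_l,\qquad T_l=\sum_{m=1}^{n}\zeta_{2n}^{(n-l)lm}.
\]
Everything hinges on the parity of the exponent $c=(n-l)l$, because $x=\zeta_{2n}^{c}$ obeys $x^{n}=\zeta_{2n}^{cn}=(-1)^{c}$. Summing the geometric series, one finds $T_l=n$ when $2n\mid c$ (the case $x=1$); $T_l=0$ when $c$ is even but $2n\nmid c$ (then $x^n=1$, $x\neq1$); and $T_l=-2x/(x-1)$ with $\mathrm{Re}\,T_l=-1$ when $c$ is odd (then $x^{n}=-1$).

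The next step is bookkeeping. Since $c=l(n-l)$ is odd exactly when $l$ and $n-l$ are both odd, such $l$ occur only for even $n$, and then for precisely the $n/2$ odd values of $l$. Collecting the three contributions gives $\mathrm{Re}\,s(n)=\sqrt n\,N_0(n)$ for odd $n$ and $\mathrm{Re}\,s(n)=\sqrt n\,N_0(n)-\tfrac12\sqrt n$ for even $n$, where $N_0(n)=\#\{\,l:0\le l\le n-1,\ 2n\mid l(n-l)\,\}$. The even-$n$ correction term built into $h^{\#}$ is exactly what is needed to cancel the $-\tfrac12\sqrt n$, so that in both cases
\[
 h^{\#}(n)=\sqrt n\,N_0(n).
\]
Because $n\mapsto\sqrt n$ is multiplicative, the theorem now reduces to showing that $N_0$ is multiplicative.

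For this last step I would invoke the Chinese Remainder Theorem, and I expect the prime $2$ to be the only genuine obstacle. A helpful preliminary is that $f(l)=l(n-l)$ satisfies $f(l+n)\equiv f(l)\pmod{2n}$, so the congruence defining $N_0$ descends to $l$ modulo $n$. For two coprime odd factors, the condition $2AB\mid l(AB-l)$ splits, across the pairwise-coprime moduli $2$, $A$, $B$, into $A\mid l^{2}$ and $B\mid l^{2}$ (the residue modulo $2$ is automatic, since $l(l+1)$ is even), and CRT yields $N_0(AB)=N_0(A)N_0(B)$ together with the identity $N_0(k)=\#\{l:k\mid l^{2}\}$ for odd $k$. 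The subtle case is $A=2^{e}A_{0}$ even and $B$ odd: here $2AB=2^{e+1}A_{0}B$, the odd parts again force $A_{0}\mid l^{2}$ and $B\mid l^{2}$, but the $2$-part is a congruence modulo $2^{e+1}$ while $l$ is only free modulo $2^{e}$ in its $2$-component. The crux of the whole proof is to verify that shifting $l$ by $2^{e}$ changes $l(AB-l)$ by a multiple of $2^{e+1}$, so this $2$-part condition is nevertheless well defined modulo $2^{e}$ and counts exactly $N_0(2^{e})$ residues. Multiplying the three CRT factors then gives $N_0(AB)=N_0(2^{e})N_0(A_{0})N_0(B)=N_0(A)N_0(B)$, completing the argument.
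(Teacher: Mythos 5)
Your reduction to $h^{\#}(n)=\sqrt{n}\,N_0(n)$ with $N_0(n)=\#\{l: 0\le l\le n-1,\ 2n\mid l(n-l)\}$ is exactly the paper's first step: the same geometric-series evaluation over $m$, the same observation that the odd-exponent terms occur only for even $n$ (for the $n/2$ odd values of $l$) and contribute $-1$ each to the real part, cancelling against the added $\tfrac12\sqrt n$. Where you genuinely diverge is in proving that $N_0$ is multiplicative. The paper evaluates $N_0(n)$ in closed form: for odd $n$ the condition collapses to $n\mid l^2$ and a direct count gives $N_0(n)=\nu$ where $n=\lambda\nu^2$ with $\lambda$ squarefree; for even $n=2m$ only even $l=2k$ survive and the count becomes $\sum_{k=1}^{m}\delta^{(m)}_{k^2,0}=\eta$ with $m=\kappa\eta^2$; multiplicativity is then checked by hand from these two formulas. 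You instead prove multiplicativity structurally via CRT without ever computing $N_0$, which is correct and arguably cleaner, but the steps you flag as the crux do need to be written out: $g(l)=l(n-l)$ satisfies $g(l+2^e)-g(l)=2^e(n-2l-2^e)\equiv 0\pmod{2^{e+1}}$ when $v_2(n)=e\ge1$, so the $2$-part condition descends to $l\bmod 2^e$; and $l(n-l)-l(2^e-l)=l\,2^e(A_0B-1)\equiv0\pmod{2^{e+1}}$ since $A_0B$ is odd, so the local count at $2$ really is $N_0(2^e)$. Both verifications go through, so your argument is sound. What the paper's route buys, beyond multiplicativity, is the explicit value $h^{\#}(n)=\sqrt n\,\nu$ (resp.\ $\sqrt n\,\eta$), which it exploits in remark (iii) to read off the squarefree part of $n$ from $h^{\#}(n)$; your route does not produce that formula directly, though it could be recovered by evaluating $N_0$ at prime powers.
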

\begin{proof}
We evaluate the sum over $m$ in $s(n)$ by expressing $S(m,n)$ in terms of the
sum over $l$ in \eqref{Smn} and carrying out the $m$ summation first,
\begin{align*}
  \sqrt{n}\,s(n)
 &=\sum_{m=1}^{n}\sum_{l=0}^{n-1} \zeta_{2n}^{(n-l)l m}\\
 &=\sum_{l=0}^{n-1}\sum_{m=1}^{n}
   \Bigl[\delta^{(2n)}_{(n-l)l,0}+\Bigl(1-\delta^{(2n)}_{(n-l)l,0}\Bigr)\Bigr]
   \zeta_{2n}^{(n-l)l m}\\
 &=\sum_{l=0}^{n-1}\Biggl[\delta^{(2n)}_{(n-l)l,0}\,n
   -\Bigl(1-\delta^{(2n)}_{(n-l)l,0}\Bigr)\frac{1-\zeta_{2n}^{(n-l)ln}}
                                             {1-\zeta_{2n}^{-(n-l)l}}\Biggr],
\end{align*}
where we separate the terms with $\zeta_{2n}^{(n-l)l}=1$ from the others.
Next, we note that
\begin{equation*}
  \frac{1}{2}\Bigl(1-\delta^{(2n)}_{(n-l)l,0}\Bigr)
              \Bigl(1-\zeta_{2n}^{(n-l)ln}\Bigr)
    =\frac{1}{2}\Bigl(1-\delta^{(2n)}_{(n-l)l,0}\Bigr)
    \Bigl(1-(-1)^{(n-1)l}\Bigr)
    =\delta^{(2)}_{n,0}\,\delta^{(2)}_{l,1}
\end{equation*}
and
\begin{equation*}
  \mathrm{Re}\left(\frac{2}{1-\zeta_{2n}^{-(n-l)l}}\right)=1.
\end{equation*}
Therefore, 
\begin{equation*}
    \sqrt{n}\,\mathrm{Re}\bigl(s(n)\bigr)
  = n\sum_{l=0}^{n-1}\delta^{(2n)}_{(n-l)l,0}-\dfrac{n}{2} \delta^{(2)}_{n,0},  
\end{equation*}
and we deduce that
\begin{equation*}
  h^{\#}(n)=\mathrm{Re}\bigl(s(n)\bigr)
             +\frac{1}{2}\sqrt{n}\,\delta^{(2)}_{n,0}
             =\sqrt{n}\sum_{l=0}^{n-1}\delta^{(2n)}_{(n-l)l,0}
             =\sqrt{n}\sum_{l=1}^{n}\delta^{(2n)}_{(n-l)l,0}.
\end{equation*}
We now proceed to show that $h^{\#}(n)$ is multiplicative.

We first consider the case of $n$ odd.
Since  
\begin{equation*}
  (n-l)l\equiv (1-l)l\equiv 0 \pmod{2}
\end{equation*}
holds when $n$ is odd, and
\begin{equation*}
  (n-l)l\equiv -l^2\pmod{n}
\end{equation*}
for all $n$, we have
\begin{equation}\label{alt-d}
  \delta_{(n-l)l,0}^{(2n)}=\delta_{(n-l)l,0}^{(n)}=\delta_{l^2,0}^{(n)}.
\end{equation}
Now write $n=\lambda\nu^2$, where $\lambda$ is squarefree, 
or equivalently that $|\mu(\lambda)|=1$.
The fact that $n|l^2$ implies that $\nu^2|l^2$ or $\nu|l$.
Therefore, if we write $l=\nu s$ then $n|l^2$ implies that $\lambda|s^2$.
But if $p|\lambda$ then $p|s^2$ and this implies that $p|s$.
Since $\lambda$ is squarefree, we conclude that $\lambda|s$ and consequently,
$l=\nu \lambda \omega$ for some positive integer $\omega$.
Since $l\leq n$, we conclude that $\omega\leq \nu$.
This implies that
\begin{equation}\label{alt-id}
  \sum_{l=1}^{n}\delta^{(n)}_{l^2,0} = \nu.
\end{equation}
We remark that \eqref{alt-id} is true for any positive integer $n$.
Combining \eqref{alt-id} and \eqref{alt-d}, we deduce that
\begin{equation*}
  h^{\#}(n) = \sqrt{n}\sum_{l=1}^{n}\delta^{(2n)}_{(n-l)l,0}
            = \sqrt{n}\sum_{l=1}^{n}\delta^{(n)}_{l^2,0}
            = \sqrt{n}\,\nu.  
\end{equation*}

Turning to $n$ even now, we write $n=2m$ with $m>0$ and observe that
\begin{align*}
   \frac{1}{\sqrt{2m}}h^{\#}(2m)
  &=\sum_{l=1}^{2m}\delta^{(4m)}_{(2m-l)l,0}
       =\ssum{l=1}{2m}{l\ \text{even}}\delta^{(4m)}_{(2m-l)l,0}\notag\\
  &=\ssum{k=1}{m}{l=2k}\delta^{(4m)}_{(2m-l)l,0}
      =\sum_{k=1}^{m}\delta^{(m)}_{(m-k)k,0}
  =\sum_{k=1}^{m}\delta^{(m)}_{k^2,0}.
\end{align*}
By \eqref{alt-id}, we conclude that
\begin{equation*}
\sum_{k=1}^m \delta^{(m)}_{k^2,0}=\eta,  
\end{equation*}
where $\eta$ is given by $m=\dfrac{n}{2}=\kappa \eta^2$, with squarefree
integer $\kappa$.
This implies that
\begin{equation*}
   h^{\#}(n) = h^{\#}(2m) = \sqrt{n}\,\eta. 
\end{equation*}

Now, let $a$ and $b$ be two positive integers with $(a,b)=1$.
If $a$ and $b$ are both odd, we may write $a=\lambda_a^{\ }\nu_a^2$ and
$b=\lambda_b^{\ }\nu_b^2$ with squarefree integers $\lambda_a$ and $\lambda_b$. 
Then
\begin{equation*}
h^{\#}(ab) = h^{\#}\bigl(\lambda_a\lambda_b(\nu_a\nu_b)^2\bigr)
=\sqrt{ab}\,\nu_a\nu_b = h^{\#}(a)h^{\#}(b).  
\end{equation*}
Next suppose either $a$ or $b$ is even.
We may assume that $a$ is even and $b$ is odd.
Write $a/2= \lambda_a^{\ }\nu_a^2$ and $b=\lambda_b^{\ }\nu_b^2$ with squarefree
integers $\lambda_a$ and $\lambda_b$; then
\begin{equation*}
h^{\#}(ab) = \sqrt{ab}\,\nu_a\nu_b. 
\end{equation*}
Now $h^{\#}(a) = \sqrt{a}\,\nu_a$ and $h^{\#}(b) =\sqrt{b}\,\nu_b$ and hence
\begin{equation*}
h^{\#}(ab) =h^{\#}(a)h^{\#}(b),
\end{equation*}
and this completes the proof that $h^{\#}(n)$ is multiplicative.
\end{proof}

\heading{Three remarks} 
(i)  Although the multiplicative function $h^{\#}$ is constructed
differently, and is \emph{another} multiplicative function in this sense, 
it is also contained in the families $h^{(1)}_{f,p}$ and $h^{(2)}_{f,p}$ of
Theorems \ref{DEBZ-general-2} and \ref{DEBZ-general-3}, because every
multiplicative function is in these families for a corresponding $f$.
This follows from the fact that the mapping $f\to\xi_f=f*\varphi$ is
invertible, and we can choose $\kappa_{\alpha}=0$ for all $\alpha$.
In particular, we have $h^{\#}=\xi_{f^{\#}}$ with the multiplicative function
$f^{\#}(n)$ specified by its prime-power values, that is 
\begin{equation*}
  h^{\#}\bigl(2^{1+\alpha}\bigr)=2^{(1+\alpha)/2+\lfloor\alpha/2\rfloor}\,,
  \quad f^{\#}\bigl(2^{1+\alpha}\bigr)=\frac{2^{1/2}-1}{3}
  \bigl[1-(-2)^{1+\alpha}\bigr]
\end{equation*}
for powers of $2$, and
\begin{equation*}
  h^{\#}\bigl(p^{\alpha}\bigr)=p^{\alpha/2+\lfloor\alpha/2\rfloor}\,,
  \quad
  f^{\#}\bigl(p^{\alpha}\bigr)
  =1+\frac{p-p^{1/2}}{p+1}\bigl[(-p)^{\alpha}-1\bigr]
\end{equation*}
for powers of odd primes, where $\alpha\geq0$.\newline
(ii)  It is striking that the $m$-sums of both the absolute value and the real
part of $S(m,n)$ yield multiplicative functions after a suitable modification
for even $n$.
This makes us wonder if there are other functions of the pair $m,n$ with this
property.\newline
(iii) Although, right now, we do not know truly useful applications of any
particular multiplicative functions in the families $h^{(1)}_{f,p}$ and
$h^{(2)}_{f,p}$, it is worth recalling that $h(n)$ of \eqref{DEBZ_h}
is closely related to a prime-distinguishing function \cite{DEBZ}.
Similarly, the value of $h^{\#}(n)$ tells us the squarefree factor $\lambda$
in $n=\lambda\nu^2$,
\begin{equation*}
  \lambda=\left\{
    \begin{array}{cl}
      \displaystyle\biggl(\frac{n}{2h^{\#}(n)}\biggr)^2
                   & \mbox{if both $n$ and $v_2(n)$ are even,}\\[2.5ex]
      \displaystyle\biggl(\frac{n}{h^{\#}(n)}\biggr)^2
                   & \mbox{otherwise,}
    \end{array}\right.
\end{equation*}
which is a corollary to the proof of Theorem~\ref{h-sharp}.
However, it is only the current lack of an efficient algorithm for the
evaluation of the Gauss sum in \eqref{Smn} that prevents $h(n)$ and $h^{\#}(n)$
from being practical tools. 

\section{Summary}
Inspired by the peculiar multiplicative function $h(n)$ in \eqref{DEBZ-h}, we
found two mappings that turn a given multiplicative function into other
multiplicative functions, with each image function specified by a privileged
prime number, a sequence of complex numbers, and a sequence of nonnegative
integers.
In addition, we reported one more multiplicative function, of a different
kind, also suggested by the structure of $h(n)$.

\bigskip\noindent\textbf{Acknowledgments.}
We sincerely thank Si Min Chan for her role in getting our collaboration
started.
We are grateful to Ron Evans for indicating the existence of explicit formulas
for $S(m,n)$ and his encouragement. 
B.-G.~E. is grateful for the encouragement by Thomas Durt, Ingemar Bengtsson,
and Karol \.Zyczkowski, the co-authors of the 2010 review article~\cite{DEBZ}. 
The Centre for Quantum Technologies is a
Research Centre of Excellence funded by the Ministry of
Education and the National Research Foundation of Singapore.

\bigskip\noindent\textbf{Dedication.}
It is a pleasure to dedicate this article to Professor Bruce Berndt on the
occasion of his 80th birthday.

\end{document}